\definecolor{red}{rgb}{1,0,0}
\definecolor{blue}{rgb}{.2,.2,.8}
\newtheorem{theorem}{Theorem}[section]
\newtheorem{conjecture}{Conjecture}
\theoremstyle{definition}
\begin{document}

\title{The powers of two as sums over partitions}
\author{Mircea Merca\\
	\footnotesize Department of Mathematics, University of Craiova, 200585 Craiova, Romania\\
	\footnotesize Academy of Romanian Scientists, Ilfov 3, Sector 5, Bucharest, Romania\\
	\footnotesize mircea.merca@profinfo.edu.ro
}
\date{}
\maketitle

\begin{abstract} 
In this paper, we investigate two methods to express the natural powers of $2$ as sums over integer partitions. First we consider a formula by N. J. Fine that allows us to express a binomial coefficient in terms of multinomial coefficients as a sum over partitions. The second method invokes the central binomial coefficients and the logarithmic differentiation of their generating function. Some experimental results suggest the existence of other methods of decomposing the power of $2$ as sums over partitions.
\\ 
\\
{\bf Keywords:}  binomial coefficients, partitions
\\
\\
{\bf MSC 2010:}   11P81, 11P83, 05A17, 
\end{abstract}

\section{Introduction}

The binomial coefficients are the positive integers that occur as coefficients in the binomial theorem, i.e.,
\begin{equation}\label{EQ2}
(x+y)^n = \sum_{k=0}^n \binom{n}{k} x^k y^{n-k}.
\end{equation}
By this identity, with $x$ and $y$ replaced by $1$, we derive a well known decompostion of $2^n$ in terms of the binomial coefficients:
\begin{equation}\label{EQ3}
2^n = \sum_{k=0}^n \binom{n}{k}.
\end{equation}
Considering the binomial theorem, we can write
$$
\frac{(1+x)^n+(1-x)^n}{2} = \sum_{\substack{k=0\\k\text{ even}}}^n \binom{n}{k} x^{k}
$$
By this identity, with $x$ replaced by $1$, we deduce the following bisection  of equation \eqref{EQ3}:  
\begin{equation}\label{EQ3a}
2^{n-1} = \sum_{\substack{k=0\\k \text{ even}}}^n \binom{n}{k} = \sum_{\substack{k=0\\k \text{ odd}}}^n \binom{n}{k}.
\end{equation}

Multinomial coefficients are generalizations of binomial coefficients, with a similar combinatorial interpretation. They are the coefficients of terms in the expansion of a power of a multinomial, i.e.,
\begin{equation}\label{EQ4}
(x_1+x_2+\cdots+x_k)^n = \sum_{\substack{\lambda_1+\lambda_2+\cdots+\lambda_k=n\\\lambda_1,\lambda_2,\ldots,\lambda_k\geqslant 0}} \binom{n}{\lambda_1,\lambda_2,\ldots,\lambda_k} x_1^{\lambda_1}x_2^{\lambda_2}\cdots x_{k}^{\lambda_k}.
\end{equation}
As we can see, the right hand side of this relation is a sum over all the compositions of $n$. 
Recall \cite{Andrews98} that a composition of a positive integer $n$ is a way of writing $n$ as a sum of positive integers, i.e.,
\begin{equation}\label{EQ1}
n=\lambda_1+\lambda_2+\cdots+\lambda_k.
\end{equation}
When the order of integers $\lambda_i$ does not matter, the representation \eqref{EQ1} is known as an integer partition  and can be rewritten as
$$n=t_1+2t_2+\cdots+nt_n,$$
where each positive integer $i$ appears $t_i$ times in the partition. The number of parts of this partition is given by
$$t_1+t_2+\cdots+t_n=k.$$

In the case $n = 2$, the multinomial theorem \eqref{EQ4} reduces to that of the binomial theorem.
The multinomial coefficient can be expressed in numerous ways, including as a product of binomial coefficients:
\begin{eqnarray}\label{EQ5}
\binom{\lambda_1+\lambda_2+\cdots+\lambda_k}{\lambda_1,\lambda_2,\ldots,\lambda_k}=\binom{\lambda_1}{\lambda_1} \binom{\lambda_1+\lambda_2}{\lambda_2} \cdots \binom{\lambda_1+\lambda_2+\cdots+\lambda_k}{\lambda_k}.  
\end{eqnarray}

Another connection between binomial coefficients and multinomial coefficients is given by the following formula published by N. J. Fine \cite[Ex. 5, p. 87]{Fine59}: for $n,k>0$,
\begin{equation} \label{EQ6}
\sum_{\substack{t_1+2t_2+\cdots+nt_n=n\\t_1+t_2+\cdots+t_n=k}}\binom{k}{t_1,t_2,\ldots,t_n}=\binom{n-1}{k-1}.
\end{equation}
In fact, the number of integer compositions of $n$ is $2^{n-1}$ and the number with exactly $k$ parts is $\binom{n-1}{k-1}$. If we ``forget" the order of the parts, we turn a composition into a partition and the number of compositions corresponding to a given partition becomes a matter of arrangements whose answer is a multinomial coefficient. So $\binom{n-1}{k-1}$ can be expressed as a sum over integer partitions, which is true for any binomial coefficient. Considering the relation \eqref{EQ3}, \eqref{EQ3a}, \eqref{EQ5} and \eqref{EQ6}, we easily derive the following 
identities.

\begin{theorem}\label{T1}
	For $n>0$,
	\begin{enumerate}
		\item $\displaystyle{\sum_{\substack{\lambda_1+\lambda_2+\cdots+\lambda_n=n\\\lambda_1\geqslant \lambda_2\geqslant \cdots \geqslant \lambda_n\geqslant 0}} \binom{\lambda_1}{\lambda_2} \binom{\lambda_2}{\lambda_3} \cdots \binom{\lambda_n}{0} = 2^{n-1}};$
		\item $\displaystyle{\sum_{\substack{\lambda_1+\lambda_2+\cdots+\lambda_n=n\\\lambda_1\geqslant \lambda_2\geqslant \cdots \geqslant \lambda_n\geqslant 0}} \binom{\lambda_1}{\lambda_2} \binom{\lambda_2}{\lambda_3} \cdots \binom{\lambda_n}{0} \frac{n}{\lambda_1} = 2^{n}-1};$
		\item $\displaystyle{\sum_{\substack{\lambda_1+\lambda_2+\cdots+\lambda_n=n\\\lambda_1\geqslant \lambda_2\geqslant \cdots \geqslant \lambda_n\geqslant 0}} \binom{\lambda_1}{\lambda_2} \binom{\lambda_2}{\lambda_3} \cdots \binom{\lambda_n}{0} \frac{\lambda_1}{n+1} = 2^{n-2}}.$
	\end{enumerate}
\end{theorem}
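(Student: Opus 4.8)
The plan is to reduce all three identities to Fine's formula~\eqref{EQ6}, the key point being that the product $\binom{\lambda_1}{\lambda_2}\binom{\lambda_2}{\lambda_3}\cdots\binom{\lambda_n}{0}$ is, after a change of summation variable, a multinomial coefficient of exactly the shape appearing on the left-hand side of~\eqref{EQ6}.

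First I would telescope the product. Fix a partition $\lambda_1\geqslant\lambda_2\geqslant\cdots\geqslant\lambda_n\geqslant 0$ of $n$ and set $\lambda_{n+1}=0$. Writing each factor as a ratio of factorials, the ``inner'' factorials cancel and one obtains
\[
\binom{\lambda_1}{\lambda_2}\binom{\lambda_2}{\lambda_3}\cdots\binom{\lambda_n}{0}
=\frac{\lambda_1!}{(\lambda_1-\lambda_2)!\,(\lambda_2-\lambda_3)!\cdots(\lambda_{n-1}-\lambda_n)!\,\lambda_n!}
=\binom{\lambda_1}{d_1,d_2,\ldots,d_n},
\]
where $d_i:=\lambda_i-\lambda_{i+1}\geqslant 0$ and $d_1+d_2+\cdots+d_n=\lambda_1$. (Equivalently, this is~\eqref{EQ5} applied to the parts $\lambda_n,\,\lambda_{n-1}-\lambda_n,\,\ldots,\,\lambda_1-\lambda_2$ taken in that order, together with $\binom{\lambda_n}{0}=1$.)

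Next I would change variables. The assignment $\lambda\mapsto(d_1,d_2,\ldots,d_n)$ is a bijection from the partitions of $n$ with at most $n$ parts onto the $n$-tuples of nonnegative integers with $d_1+2d_2+\cdots+nd_n=n$, its inverse being $\lambda_i=d_i+d_{i+1}+\cdots+d_n$; this is simply conjugation of partitions, and under it $\lambda_1$ corresponds to $k:=d_1+d_2+\cdots+d_n$, the total number of parts. Hence, for any function $f$, grouping the terms according to the value $k=\lambda_1\in\{1,\ldots,n\}$ and applying~\eqref{EQ6} gives
\[
\sum_{\substack{\lambda_1+\cdots+\lambda_n=n\\ \lambda_1\geqslant\cdots\geqslant\lambda_n\geqslant 0}}\binom{\lambda_1}{\lambda_2}\binom{\lambda_2}{\lambda_3}\cdots\binom{\lambda_n}{0}\,f(\lambda_1)
=\sum_{k=1}^{n}f(k)\!\!\sum_{\substack{t_1+2t_2+\cdots+nt_n=n\\ t_1+\cdots+t_n=k}}\!\!\binom{k}{t_1,\ldots,t_n}
=\sum_{k=1}^{n}f(k)\binom{n-1}{k-1}.
\]

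It then remains to evaluate $\sum_{k=1}^{n}f(k)\binom{n-1}{k-1}$ for the three relevant weights. For $f\equiv1$ this equals $2^{n-1}$ by~\eqref{EQ3}, which is part~(1). For $f(k)=n/k$, the identity $\tfrac{n}{k}\binom{n-1}{k-1}=\binom{n}{k}$ turns it into $\sum_{k=1}^{n}\binom{n}{k}=2^{n}-1$, again by~\eqref{EQ3}; this is part~(2). For $f(k)=\lambda_1/(n+1)=k/(n+1)$, using the absorption identity $k\binom{n-1}{k-1}=(n-1)\binom{n-2}{k-2}+\binom{n-1}{k-1}$ one gets $\sum_{k=1}^{n}k\binom{n-1}{k-1}=(n-1)2^{n-2}+2^{n-1}=(n+1)2^{n-2}$, so the sum equals $2^{n-2}$, which is part~(3).

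The only step that needs genuine care is the change of variables in the third paragraph: one must verify that the telescoped product together with the constraint $\lambda_1\geqslant\cdots\geqslant\lambda_n\geqslant 0$ reproduces \emph{exactly} the left-hand side of~\eqref{EQ6}, i.e.\ that the conjugation substitution matches both partition statistics $\sum i\,t_i=n$ and $\sum t_i=k$, and that padding partitions to length $n$ with zeros neither omits nor double-counts any term (the boundary partitions $(n,0,\ldots,0)$ and $(1,1,\ldots,1)$ being the ones worth checking explicitly). Everything after that is routine manipulation of binomial coefficients.
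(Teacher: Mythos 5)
Your proposal is correct and follows essentially the same route as the paper: it telescopes the product $\binom{\lambda_1}{\lambda_2}\cdots\binom{\lambda_n}{0}$ into the multinomial coefficient of the conjugate partition (the paper's use of \eqref{EQ5} together with conjugation), applies Fine's identity \eqref{EQ6} to reduce each sum to $\sum_{k=1}^{n}f(k)\binom{n-1}{k-1}$, and evaluates the three resulting binomial sums exactly as the paper does (the paper merely runs the argument in the reverse direction, starting from $2^{n-1}$, $2^{n}-1$ and $(n+1)2^{n-2}$).
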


For example, the following are the partitions of $5$:
\begin{equation*}
5,\ 4+1,\ 3+2,\ 3+1+1,\ 2+2+1,\ 2+1+1+1,\ 1+1+1+1+1.
\end{equation*}
According to Theorem \ref{T1}, we have:
\begin{align*}
 2^4 &= \binom{5}{0}+\binom{4}{1}\binom{1}{0}+\binom{3}{2}\binom{2}{0} + \binom{3}{1}\binom{1}{1}\binom{1}{0}  \\ 
& \qquad + \binom{2}{2}\binom {2}{1} \binom{1}{0} +\binom{2}{1}\binom{1}{1}\binom{1}{1}\binom{1}{0} +\binom{1}{1}\binom{1}{1}\binom{1}{1}\binom{1}{1}\binom{1}{0}
\end{align*}
and
\begin{align*}
2^5-1 &=
\binom{5}{0}\frac{5}{5}+\binom{4}{1}\binom{1}{0}\frac{5}{4}+\binom{3}{2}\binom{2}{0}\frac{5}{3} + \binom{3}{1}\binom{1}{1}\binom{1}{0}\frac{5}{3} \\
& \qquad + \binom{2}{2}\binom {2}{1} \binom{1}{0}\frac{5}{2} + \binom{2}{1}\binom{1}{1}\binom{1}{1}\binom{1}{0}\frac{5}{2} +\binom{1}{1}\binom{1}{1}\binom{1}{1}\binom{1}{1}\binom{1}{0} \frac{5}{1}
\end{align*}
and
\begin{align*}
2^3 &=
\binom{5}{0}\frac{5}{6}+\binom{4}{1}\binom{1}{0}\frac{4}{6}+\binom{3}{2}\binom{2}{0}\frac{3}{6} + \binom{3}{1}\binom{1}{1}\binom{1}{0}\frac{3}{6} \\
& \qquad + \binom{2}{2}\binom {2}{1} \binom{1}{0}\frac{2}{6} + \binom{2}{1}\binom{1}{1}\binom{1}{1}\binom{1}{0}\frac{2}{6} +\binom{1}{1}\binom{1}{1}\binom{1}{1}\binom{1}{1}\binom{1}{0} \frac{1}{6}.
\end{align*}


\begin{theorem}\label{T1a}
	For $n>1$,
	\begin{enumerate}
		\item $\displaystyle{\sum_{\substack{\lambda_1+\lambda_2+\cdots+\lambda_n=n\\\lambda_1\geqslant \lambda_2\geqslant \cdots \geqslant \lambda_n\geqslant 0\\ \lambda_1\text{ odd}}} \binom{\lambda_1}{\lambda_2} \binom{\lambda_2}{\lambda_3} \cdots \binom{\lambda_n}{0} = 2^{n-2}};$
		\item $\displaystyle{\sum_{\substack{\lambda_1+\lambda_2+\cdots+\lambda_n=n\\\lambda_1\geqslant \lambda_2\geqslant \cdots \geqslant \lambda_n\geqslant 0\\ \lambda_1\text{ even}}} \binom{\lambda_1}{\lambda_2} \binom{\lambda_2}{\lambda_3} \cdots \binom{\lambda_n}{0} = 2^{n-2}};$
		\item $\displaystyle{\sum_{\substack{\lambda_1+\lambda_2+\cdots+\lambda_n=n\\\lambda_1\geqslant \lambda_2\geqslant \cdots \geqslant \lambda_n\geqslant 0\\ \lambda_1\text{ odd}}} \binom{\lambda_1}{\lambda_2} \binom{\lambda_2}{\lambda_3} \cdots \binom{\lambda_n}{0} \frac{n}{\lambda_1} = 2^{n-1}};$
		\item $\displaystyle{\sum_{\substack{\lambda_1+\lambda_2+\cdots+\lambda_n=n\\\lambda_1\geqslant \lambda_2\geqslant \cdots \geqslant \lambda_n\geqslant 0\\ \lambda_1\text{ even}}} \binom{\lambda_1}{\lambda_2} \binom{\lambda_2}{\lambda_3} \cdots \binom{\lambda_n}{0} \frac{n}{\lambda_1} = 2^{n-1}-1};$
		\item $\displaystyle{\sum_{\substack{\lambda_1+\lambda_2+\cdots+\lambda_n=n\\\lambda_1\geqslant \lambda_2\geqslant \cdots \geqslant \lambda_n\geqslant 0\\ \lambda_1\text{ odd}}} \binom{\lambda_1}{\lambda_2} \binom{\lambda_2}{\lambda_3} \cdots \binom{\lambda_n}{0} \frac{\lambda_1}{n+1} = 2^{n-3}};$
		\item $\displaystyle{\sum_{\substack{\lambda_1+\lambda_2+\cdots+\lambda_n=n\\\lambda_1\geqslant \lambda_2\geqslant \cdots \geqslant \lambda_n\geqslant 0\\ \lambda_1\text{ even}}} \binom{\lambda_1}{\lambda_2} \binom{\lambda_2}{\lambda_3} \cdots \binom{\lambda_n}{0} \frac{\lambda_1}{n+1} = 2^{n-3}}.$		
	\end{enumerate}
\end{theorem}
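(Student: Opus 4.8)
The plan is to first isolate the refinement of Theorem~\ref{T1} that the same computation already delivers, and then sum that refinement against the three weights with a parity restriction. Fix a partition of $n$ written in weakly decreasing form $\lambda_1\geqslant\lambda_2\geqslant\cdots\geqslant\lambda_n\geqslant 0$, with the convention $\lambda_{n+1}=0$. By \eqref{EQ5}, applied with the parts $\lambda_1-\lambda_2,\ \lambda_2-\lambda_3,\ \ldots,\ \lambda_{n-1}-\lambda_n,\ \lambda_n$ taken in reverse order (equivalently, by a direct telescoping of factorials), one has
\begin{equation*}
\binom{\lambda_1}{\lambda_2}\binom{\lambda_2}{\lambda_3}\cdots\binom{\lambda_n}{0}=\frac{\lambda_1!}{\prod_{j=1}^{n}(\lambda_j-\lambda_{j+1})!}=\binom{\lambda_1}{\lambda_1-\lambda_2,\ \lambda_2-\lambda_3,\ \ldots,\ \lambda_n}.
\end{equation*}
Since $\lambda_j-\lambda_{j+1}$ is the number of parts of the conjugate partition $\lambda'$ equal to $j$, while $\lambda_1$ is the number of parts of $\lambda'$, the right-hand side is exactly the multinomial coefficient that \eqref{EQ6} attaches to $\lambda'$. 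As conjugation is an involution on the partitions of $n$ interchanging ``largest part'' and ``number of parts'', summing over all partitions of $n$ with $\lambda_1=k$ and invoking Fine's identity \eqref{EQ6} yields
\begin{equation*}
\sum_{\substack{\lambda_1+\lambda_2+\cdots+\lambda_n=n\\\lambda_1\geqslant\lambda_2\geqslant\cdots\geqslant\lambda_n\geqslant 0\\ \lambda_1=k}}\binom{\lambda_1}{\lambda_2}\binom{\lambda_2}{\lambda_3}\cdots\binom{\lambda_n}{0}=\binom{n-1}{k-1},\qquad 1\leq k\leq n.
\end{equation*}
Every weight appearing in Theorem~\ref{T1a} depends on $\lambda$ only through $\lambda_1$, so after multiplying by that weight and summing over $k$ of a fixed parity, each of the six identities becomes an elementary evaluation of $\sum_k w(k)\binom{n-1}{k-1}$.

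For items (1) and (2) the weight is $1$; with the substitution $j=k-1$, ``$\lambda_1$ odd'' becomes ``$j$ even'' and ``$\lambda_1$ even'' becomes ``$j$ odd'', so \eqref{EQ3a} (with $n$ replaced by $n-1$) gives $\sum_{j\text{ even}}\binom{n-1}{j}=\sum_{j\text{ odd}}\binom{n-1}{j}=2^{n-2}$. For items (3) and (4) the weight is $n/\lambda_1=n/k$, and the absorption identity $\frac nk\binom{n-1}{k-1}=\binom nk$ turns the sums into parity-restricted sums of $\binom nk$; by \eqref{EQ3a} the odd one equals $2^{n-1}$, and the even one (which omits the term $k=0$) equals $2^{n-1}-\binom n0=2^{n-1}-1$.

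For items (5) and (6) the weight is $\lambda_1/(n+1)=k/(n+1)$. Here I would write $k=(k-1)+1$ and use $(k-1)\binom{n-1}{k-1}=(n-1)\binom{n-2}{k-2}$, so that
\begin{equation*}
\sum_{k}\frac{k}{n+1}\binom{n-1}{k-1}=\frac{1}{n+1}\left((n-1)\sum_{k}\binom{n-2}{k-2}+\sum_{k}\binom{n-1}{k-1}\right),
\end{equation*}
both inner sums running over one parity class of $k$. After the shifts $k\mapsto k-2$ and $k\mapsto k-1$, each inner sum is a parity-restricted binomial row sum; by \eqref{EQ3a} the first equals $2^{n-3}$ and the second equals $2^{n-2}$ in either parity, so the whole expression collapses to $\frac{1}{n+1}\bigl((n-1)2^{n-3}+2^{n-2}\bigr)=\frac{(n+1)2^{n-3}}{n+1}=2^{n-3}$, which is simultaneously (5) and (6).

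I do not expect a genuine obstacle: the argument is a reduction to Fine's identity \eqref{EQ6}, followed by the bisection \eqref{EQ3a} and two standard absorption identities for binomial coefficients. The points that call for a little care are the parity bookkeeping through the index shifts ($\lambda_1$ odd $\iff k$ odd $\iff k-1$ even) and the small values of $n$ in items (5)--(6), where the coefficients $\binom{n-2}{\cdot}$ degenerate and the boundary cases must be checked on their own rather than through \eqref{EQ3a}.
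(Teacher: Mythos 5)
Your argument is essentially the paper's own proof: the paper derives Theorem \ref{T1} from Fine's identity \eqref{EQ6} together with conjugation of partitions, and then states that the proof of Theorem \ref{T1a} is ``quite similar'' and omits it; your fixed-largest-part refinement $\sum_{\lambda_1=k}\binom{\lambda_1}{\lambda_2}\cdots\binom{\lambda_n}{0}=\binom{n-1}{k-1}$, followed by the bisection \eqref{EQ3a} and the absorption identities, is exactly that omitted computation, and items (1)--(4) are fully proved for all $n>1$. One substantive point: the boundary case you deferred in items (5)--(6) is not just a degenerate case awaiting a separate check --- for $n=2$ the only partitions are $2$ and $1+1$, and the two sums are $\tfrac13$ and $\tfrac23$, not $2^{n-3}=\tfrac12$, so those two identities are actually false at $n=2$ and require $n>2$ (which is precisely the range where your application of \eqref{EQ3a} to the row $n-2$ is legitimate). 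Thus your proof is complete on the range where the statement holds, and the discrepancy is a slip in the paper's stated hypothesis $n>1$ for items (5)--(6) rather than a gap in your argument; it would be worth adding one sentence recording the $n=2$ counterexample instead of promising an unspecified boundary check.
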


The case $n=5$ of this theorem reads as follows:
\begin{align*}
2^3 &=
\binom{5}{0}+\binom{3}{2}\binom {2}{0} + \binom{3}{1}\binom{1}{1}\binom{1}{0} +\binom{1}{1}\binom{1}{1}\binom{1}{1}\binom{1}{1}\binom{1}{0},\\
2^3 &= \binom{4}{1}\binom{1}{0}+\binom{2}{2}\binom{2}{1} \binom{1}{0} 
  +\binom{2}{1}\binom{1}{1}\binom{1}{1}\binom{1}{0},\\
2^4 &=
\binom{5}{0}\frac{5}{5}+\binom{3}{2}\binom {2}{0}\frac{5}{3} + \binom{3}{1}\binom{1}{1}\binom{1}{0}\frac{5}{3} +\binom{1}{1}\binom{1}{1}\binom{1}{1}\binom{1}{1}\binom{1}{0}\frac{5}{1},\\
2^4-1 &= \binom{4}{1}\binom{1}{0}\frac{5}{4}+\binom{2}{2}\binom{2}{1} \binom{1}{0} \frac{5}{2}
+\binom{2}{1}\binom{1}{1}\binom{1}{1}\binom{1}{0}\frac{5}{2},\\
2^2 &=
\binom{5}{0}\frac{5}{6}+\binom{3}{2}\binom {2}{0}\frac{3}{6} + \binom{3}{1}\binom{1}{1}\binom{1}{0}\frac{3}{6} +\binom{1}{1}\binom{1}{1}\binom{1}{1}\binom{1}{1}\binom{1}{0}\frac{1}{6},\\
2^2 &= \binom{4}{1}\binom{1}{0}\frac{4}{6}+\binom{2}{2}\binom{2}{1} \binom{1}{0} \frac{2}{6}
+\binom{2}{1}\binom{1}{1}\binom{1}{1}\binom{1}{0}\frac{2}{6}.
\end{align*}

In this paper, motivated by Theorems \ref{T1} and \ref{T1a}, we shall provide other decompositions of the powers of $2$ in terms of the binomial coefficients as sums over integer partitions.

\begin{theorem} \label{T2}
	For $n>0$,
	\begin{enumerate}
		\item $\displaystyle{\sum_{\substack{\lambda_1+\lambda_2+\cdots+\lambda_n=n\\\lambda_1\geqslant \lambda_2\geqslant \cdots \geqslant \lambda_n\geqslant 0}} \binom{\lambda_1}{\lambda_2} \binom{\lambda_2}{\lambda_3} \cdots \binom{\lambda_n}{0} \frac{n}{\lambda_1} \frac{1^{\lambda_1}1^{\lambda_2}3^{\lambda_3}\cdots(2n-3)^{\lambda_n}}
			{1^{\lambda_1}2^{\lambda_2}\cdots n^{\lambda_n}}= 2^{n-1}};$
		\item $\displaystyle{\sum_{\substack{\lambda_1+\lambda_2+\cdots+\lambda_n=n\\\lambda_1\geqslant \lambda_2\geqslant \cdots \geqslant \lambda_n\geqslant 0}} (-1)^{1+\lambda_1}\binom{\lambda_1}{\lambda_2} \binom{\lambda_2}{\lambda_3} \cdots \binom{\lambda_n}{0} \frac{n}{\lambda_1} \frac{1^{\lambda_1}3^{\lambda_2}\cdots(2n-1)^{\lambda_n}}
			{1^{\lambda_1}2^{\lambda_2}\cdots n^{\lambda_n}}= 2^{n-1}}.$
	\end{enumerate}
\end{theorem}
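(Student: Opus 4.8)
\medskip
\noindent The plan is to recast each sum in Theorem~\ref{T2} as $n$ times a single Taylor coefficient of a logarithm, exactly as the sums in Theorem~\ref{T1} arise, and then to evaluate that logarithm using $(1-2x)^{-1/2}=\sum_{i\ge 0}\binom{2i}{i}2^{-i}x^{i}$, the generating function of the central binomial coefficients (with $x$ rescaled by $\tfrac12$).

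First I would set up the same conjugation bookkeeping that underlies Theorem~\ref{T1}. Given a partition of $n$ written as $\lambda_1\ge\lambda_2\ge\cdots\ge\lambda_n\ge0$, put $t_i:=\lambda_i-\lambda_{i+1}$ (with $\lambda_{n+1}=0$); these are the part-multiplicities of the conjugate partition $\mu$, so $\lambda_j=t_j+t_{j+1}+\cdots+t_n$ and $\lambda_1=t_1+\cdots+t_n=\ell(\mu)$, and as $\lambda$ runs over all partitions of $n$ so does $\mu$. Telescoping \eqref{EQ5} gives
\[
\binom{\lambda_1}{\lambda_2}\binom{\lambda_2}{\lambda_3}\cdots\binom{\lambda_n}{0}=\binom{t_1+\cdots+t_n}{t_1,t_2,\ldots,t_n}=\frac{\ell(\mu)!}{t_1!\,t_2!\cdots t_n!}.
\]
I would also record the elementary identity $\prod_{j=1}^{n}c_j^{\lambda_j}=\prod_{i=1}^{n}(c_1c_2\cdots c_i)^{t_i}$, valid for any numbers $c_1,\dots,c_n$, since the exponent of $c_j$ on the left is $t_j+\cdots+t_n$. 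Applying this with $c_1=1$, $c_j=(2j-3)/j$ for $j\ge2$ in part~(1), and $c_j=(2j-1)/j$ in part~(2) — that is, with $c_j$ equal to the numerator base over the denominator base — produces the partial products
\[
c_1c_2\cdots c_i=\frac{1\cdot 1\cdot 3\cdot 5\cdots(2i-3)}{i!}=:d_i,\qquad\text{resp.}\qquad c_1c_2\cdots c_i=\frac{1\cdot 3\cdot 5\cdots(2i-1)}{i!}=:e_i .
\]
Finally $\dfrac{n}{\lambda_1}\cdot\dfrac{\ell(\mu)!}{\prod t_i!}=n\cdot\dfrac{(\ell(\mu)-1)!}{\prod t_i!}$ and $(-1)^{1+\lambda_1}=(-1)^{\ell(\mu)-1}$, so the two sums of Theorem~\ref{T2} become
\[
n\sum_{\mu\vdash n}\frac{(\ell(\mu)-1)!}{\prod_i m_i(\mu)!}\prod_i d_i^{\,m_i(\mu)}\qquad\text{and}\qquad n\sum_{\mu\vdash n}(-1)^{\ell(\mu)-1}\frac{(\ell(\mu)-1)!}{\prod_i m_i(\mu)!}\prod_i e_i^{\,m_i(\mu)} .
\]

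Next I would invoke the standard formal power series expansion: for $g(x)=\sum_{i\ge1}g_ix^{i}$, writing $\log\frac1{1-g}=\sum_{K\ge1}g^{K}/K$ (resp. $\log(1+g)=\sum_{K\ge1}(-1)^{K-1}g^{K}/K$) and expanding each $g^{K}$ by the multinomial theorem yields
\[
[x^{n}]\log\frac1{1-g(x)}=\sum_{\mu\vdash n}\frac{(\ell(\mu)-1)!}{\prod_i m_i(\mu)!}\prod_i g_i^{\,m_i(\mu)},\qquad [x^{n}]\log\bigl(1+g(x)\bigr)=\sum_{\mu\vdash n}(-1)^{\ell(\mu)-1}\frac{(\ell(\mu)-1)!}{\prod_i m_i(\mu)!}\prod_i g_i^{\,m_i(\mu)} .
\]
So it remains only to identify $\sum_{i\ge1}d_ix^{i}$ and $1+\sum_{i\ge1}e_ix^{i}$. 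Using $e_i=\binom{2i}{i}2^{-i}$ and $\binom{1/2}{i}(-2)^{i}=-d_i$ (equivalently $d_i=C_{i-1}2^{-(i-1)}$ with $C_m$ the Catalan numbers), one checks
\[
1+\sum_{i\ge1}e_ix^{i}=\sum_{i\ge0}\binom{2i}{i}\Bigl(\tfrac x2\Bigr)^{i}=\frac1{\sqrt{1-2x}},\qquad \sum_{i\ge1}d_ix^{i}=1-\sqrt{1-2x},
\]
the first being the central binomial generating function and the second $1$ minus its reciprocal. In both cases the logarithm to be extracted is
\[
\log\frac1{\sqrt{1-2x}}=-\tfrac12\log(1-2x)=\sum_{k\ge1}\frac{2^{k-1}}{k}\,x^{k},
\]
the last equality being the integral of the logarithmic derivative $\frac{d}{dx}\log(1-2x)^{-1/2}=\frac1{1-2x}=\sum_{k\ge0}2^{k}x^{k}$ of the central binomial generating function. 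Hence both displayed sums equal $n\cdot[x^{n}]\log(1-2x)^{-1/2}=n\cdot\frac{2^{n-1}}{n}=2^{n-1}$, which proves the theorem.

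I expect the only delicate point to be the bookkeeping of the second paragraph: the telescoping of \eqref{EQ5} under the substitution $t_i=\lambda_i-\lambda_{i+1}$, the rewriting $\prod_j c_j^{\lambda_j}=\prod_i(c_1\cdots c_i)^{t_i}$, and keeping the index conventions straight (the base attached to $\lambda_1$ is $1$, not $2\cdot1-3$, and $1\cdot 1\cdot 3\cdots(2i-3)$ degenerates to $1$ for $i=1,2$). Once the sums are brought into the form "$n$ times a partition sum weighted by $g_i^{\,m_i}$", recognizing $g$ as a simple algebraic function of the central binomial generating function and collapsing the logarithm is routine.
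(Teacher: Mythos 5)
Your proposal is correct and is essentially the paper's own argument: both rewrite the partition sum via conjugation ($t_i=\lambda_i-\lambda_{i+1}$) into multinomial form and then recognize it inside the logarithm of the central binomial generating function, where $\log(1-2x)^{-1/2}$ (resp.\ the paper's $\frac{d}{dz}\ln\frac{1}{\sqrt{1-4z}}$) produces the powers of $2$. The only differences are cosmetic streamlinings — you read off the factor $n$ as $n\cdot[x^n]$ of the logarithm instead of differentiating, and for part (1) you use the positive Catalan-type series $1-\sqrt{1-2x}$ inside $\log\frac{1}{1-g}$, which avoids the paper's sign-cancellation bookkeeping with $\sum_{n\ge1}\frac{(-1)^{n-1}}{2n-1}\binom{2n}{n}z^n$.
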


The case $n=5$ of this theorem reads as follows:
\begin{align*}
2^4 &= \binom{5}{0}\frac{5}{5}\frac{1^5}{1^5}
+\binom{4}{1}\binom{1}{0}\frac{5}{4}\frac{1^41^1}{1^42^1}
+\binom{3}{2}\binom{2}{0}\frac{5}{3}\frac{1^31^2}{1^32^2}
+\binom{3}{1}\binom{1}{1}\binom{1}{0}\frac{5}{3}\frac{1^31^13^1}{1^32^13^1}\\
&\qquad+\binom{2}{2}\binom{2}{1}\binom{1}{0}\frac{5}{2}\frac{1^21^23^1}{1^22^23^1}
+\binom{2}{1}\binom{1}{1}\binom{1}{1}\binom{1}{0}\frac{5}{2}\frac{1^21^13^15^1}{1^22^13^14^1}\\
&\qquad+\binom{1}{1}\binom{1}{1}\binom{1}{1}\binom{1}{1}\binom{1}{0}\frac{5}{1}\frac{1^11^13^15^17^1}{1^12^13^14^15^1} 
\end{align*}
and
\begin{align*}
2^4
& =  \binom{5}{0}\frac{5}{5}\frac{1^5}{1^5}
-\binom{4}{1}\binom{1}{0}\frac{5}{4}\frac{1^43^1}{1^42^1}
+\binom{3}{2}\binom{2}{0}\frac{5}{3}\frac{1^33^2}{1^32^2}
+\binom{3}{1}\binom{1}{1}\binom{1}{0}\frac{5}{3}\frac{1^33^15^1}{1^32^13^1}\\
&\qquad -\binom{2}{2}\binom{2}{1}\binom{1}{0}\frac{5}{2}\frac{1^23^25^1}{1^22^23^1}
-\binom{2}{1}\binom{1}{1}\binom{1}{1}\binom{1}{0}\frac{5}{2}\frac{1^23^15^17^1}{1^22^13^14^1}\\
&\qquad +\binom{1}{1}\binom{1}{1}\binom{1}{1}\binom{1}{1}\binom{1}{0}\frac{5}{1}\frac{1^13^15^17^19^1}{1^12^13^14^15^1}.
\end{align*}

The rest of this paper is organized as follows. We will first prove Theorem \ref{T1} in
Section \ref{S2}. In Section \ref{S3}, we will provide the proof of Theorem \ref{T2}. In Section \ref{S4}, we will conjecture a further decomposition of $2^n$ as sum over all the integer partitions of $n$.

\section{Proof of Theorems \ref{T1} and \ref{T1a}}
\label{S2}
\allowdisplaybreaks{
Taking into account \eqref{EQ3} and \eqref{EQ6}, we can write
\begin{align}
2^{n-1} & = \sum_{k=0}^{n-1} \binom{n-1}{k} = \sum_{k=1}^n \binom{n-1}{k-1}
= \sum_{k=1}^n \sum_{\substack{t_1+2t_2+\cdots+nt_n=n\\t_1+t_2+\cdots+t_n=k}}\binom{k}{t_1,t_2,\ldots,t_n}\nonumber \\
& = \sum_{t_1+2t_2+\cdots+nt_n=n}\binom{t_1+t_2+\cdots+t_n}{t_1,t_2,\ldots,t_n}\label{EQ2.1}
\end{align}
and
\begin{align}
2^n & = 1 + \sum_{k=1}^n \binom{n}{k} = 1+\sum_{k=1}^n \frac{n}{k} \binom{n-1}{k-1}\nonumber \\
& = 1+\sum_{k=1}^n \sum_{\substack{t_1+2t_2+\cdots+nt_n=n\\t_1+t_2+\cdots+t_n=k}}\binom{k}{t_1,t_2,\ldots,t_n}\frac{n}{k}
\nonumber \\
& =1+ \sum_{t_1+2t_2+\cdots+nt_n=n}\binom{t_1+t_2+\cdots+t_n}{t_1,t_2,\ldots,t_n} \frac{n}{t_1+t_2+\cdots+t_n}\label{EQ2.2}
\end{align}
and
\begin{align}
(n+1)2^{n-2} & = 2^{n-1}+(n-1)2^{n-2} \nonumber \\
& = \sum_{k=1}^{n} \binom{n-1}{k-1} + (n-1) \sum_{k=1}^{n-1} \binom{n-2}{k-1} \nonumber \\
& = \sum_{k=1}^{n} \binom{n-1}{k-1} + \sum_{k=0}^{n-1} k\binom{n-1}{k}\nonumber \\
& = \sum_{k=1}^{n} \binom{n-1}{k-1} + \sum_{k=1}^n (k-1) \binom{n-1}{k-1} \nonumber \\
& = \sum_{k=1}^n \sum_{\substack{t_1+2t_2+\cdots+nt_n=n\\t_1+t_2+\cdots+t_n=k}}\binom{k}{t_1,t_2,\ldots,t_n}k
\nonumber \\
& = \sum_{t_1+2t_2+\cdots+nt_n=n}\binom{t_1+t_2+\cdots+t_n}{t_1,t_2,\ldots,t_n}(t_1+t_2+\cdots+t_n).
\label{EQ2.3}
\end{align}
We see that these decompositions of the powers of $2$ are sums over all the partitions of $n$.
Another useful way to think of a partition is with a Ferrers diagram. Each integer in the partition is represented by a row of dots, and the rows are ordered from longest on the top to shortest at the bottom.  For example, the partition $5+4+2+2$ would be represented by 
$$
\begin{array}{ccccc}
\bullet&\bullet&\bullet&\bullet&\bullet\\
\bullet&\bullet&\bullet&\bullet\\
\bullet&\bullet\\
\bullet&\bullet
\end{array}
$$
The conjugate of a partition is the one corresponding to the Ferrers diagram produced by flipping the diagram for the original partition across the main diagonal, thus turning rows into columns and vice versa. For the diagram above, the conjugate is
$$
\begin{array}{ccccc}
\bullet&\bullet&\bullet&\bullet\\
\bullet&\bullet&\bullet&\bullet\\
\bullet&\bullet\\
\bullet&\bullet\\
\bullet
\end{array}
$$
with corresponding partition $4+4+2+2+1$.
The action of conjugation takes every partition of one type into a partition of the other: the conjugate of a partition into $k$ parts is a partition with largest part $k$ and vice versa. This establishes a $1–1$ correspondence between partitions into $k$ parts and partitions with largest part $k$. Let $\lambda=(\lambda_1,\lambda_2,\ldots,\lambda_k)$ be the conjugate partition of the partition $t_1+2t_2+\cdots+nt_n=n$. 
It is clear that $\lambda_i = t_i+t_{i+1}+\cdots+t_n$. In this way, considering \eqref{EQ5}, \eqref{EQ2.1}, \eqref{EQ2.2} and \eqref{EQ2.3}, we deduce that
	\begin{align*}
	2^{n-1} & = \sum_{\substack{\lambda_1+\lambda_2+\cdots+\lambda_n=n\\\lambda_1\geqslant \lambda_2\geqslant \cdots \geqslant \lambda_n\geqslant 0}} \binom{\lambda_1}{\lambda_1-\lambda_2,\lambda_2-\lambda_3,\ldots,\lambda_{n-1}-\lambda_n,\lambda_n}\\
	& = \sum_{\substack{\lambda_1+\lambda_2+\cdots+\lambda_n=n\\\lambda_1\geqslant \lambda_2\geqslant \cdots \geqslant \lambda_n\geqslant 0}} \binom{\lambda_n+(\lambda_{n-1}-\lambda_n)+\cdots+(\lambda_1-\lambda_2)}{\lambda_n,\lambda_{n-1}-\lambda_{n},\ldots,\lambda_{1}-\lambda_2}\\
	& = \sum_{\substack{\lambda_1+\lambda_2+\cdots+\lambda_n=n\\\lambda_1\geqslant \lambda_2\geqslant \cdots \geqslant \lambda_n\geqslant 0}} 
	\binom{\lambda_n}{\lambda_n} \binom{\lambda_{n-1}}{\lambda_{n-1}-\lambda_n} \cdots \binom{\lambda_1}{\lambda_1-\lambda_2}
	\end{align*}
	and
	\begin{align*}
	2^{n} & = 1+ \sum_{\substack{\lambda_1+\lambda_2+\cdots+\lambda_n=n\\\lambda_1\geqslant \lambda_2\geqslant \cdots \geqslant \lambda_n\geqslant 0}} 
	\binom{\lambda_n}{\lambda_n} \binom{\lambda_{n-1}}{\lambda_{n-1}-\lambda_n} \cdots \binom{\lambda_1}{\lambda_1-\lambda_2} \frac{n}{\lambda_1}
	\end{align*}
	and
	\begin{align*}
	2^{n-2} & = \sum_{\substack{\lambda_1+\lambda_2+\cdots+\lambda_n=n\\\lambda_1\geqslant \lambda_2\geqslant \cdots \geqslant \lambda_n\geqslant 0}} 
	\binom{\lambda_n}{\lambda_n} \binom{\lambda_{n-1}}{\lambda_{n-1}-\lambda_n} \cdots \binom{\lambda_1}{\lambda_1-\lambda_2} \frac{\lambda_1}{n+1}.
	\end{align*}	
	These conclude the proof of Theorem \ref{T1}. The proof of Theorem \ref{T1a} is quite similar to the proof of Theorem \ref{T1}, so we omit the details.}

\section{Proof of Theorem \ref{T2}}
\label{S3}

In order to prove this theorem, we consider some known generating functions involving the central binomial coefficients $\binom{2n}{n}$:
$$\sum\limits_{n=0}^\infty \binom{2n}{n} z^n = \frac{1}{\sqrt{1-4z}},
\qquad
\sum\limits_{n=0}^\infty n \binom{2n}{n} z^n = \frac{2z}{\sqrt{(1-4z)^3}}$$ 
and 
$$\sum\limits_{n=0}^\infty \frac{1}{2n-1} \binom{2n}{n} z^n = -\sqrt{1-4z}.$$
We can write
\allowdisplaybreaks{
	\begin{align*}
	& \frac{d}{dz} \ln\left( 1+\sum_{n=1}^\infty \binom{2n}{n} z^n \right) 
	= \left( \sum\limits_{n=1}^\infty n \binom{2n}{n} z^{n-1} \right) 
	\left( 1+\sum\limits_{n=1}^\infty \binom{2n}{n} z^n \right) ^{-1}  \\
	& \qquad\qquad = \frac{2} {\sqrt{(1-4z)^3}}\cdot \sqrt{1-4z} = \frac{2}{1-4z} 
	= \sum_{n=1}^\infty 2^{2n-1} z^{n-1}  
	\end{align*}}
and
\allowdisplaybreaks{
	\begin{align*}
	& \frac{d}{dz} \ln\left( 1+\sum_{n=1}^\infty \frac{(-1)^{n-1}}{2n-1} \binom{2n}{n} z^n \right) \nonumber \\
	& \qquad\qquad= \left( \sum\limits_{n=1}^\infty \frac{(-1)^{n-1}n}{2n-1} \binom{2n}{n} z^{n-1} \right) 
	\left( 1+\sum\limits_{n=1}^\infty \frac{(-1)^{n-1}}{2n-1} \binom{2n}{n} z^n \right) ^{-1}  \\
	& \qquad\qquad=\left( 2\sum_{n=0}^\infty (-1)^n \binom{2n}{n} z^n \right) 
	\left( 1+\sum\limits_{n=1}^\infty \frac{(-1)^{n-1}}{2n-1} \binom{2n}{n} z^n \right) ^{-1}  \\
	& \qquad\qquad = \frac{2}{\sqrt{1+4z}} \cdot \frac{1}{\sqrt{1+4z}} = \frac{2}{1+4z} 
	= \sum_{n=1}^\infty (-1)^{n-1} 2^{2n-1} z^{n-1}.
	\end{align*}}
On the other hand, considering the logarithmic series
$$
\ln (1+z) = \sum_{n=1}^\infty  \frac{(-1)^{n-1}}{n} z^n,\qquad |z|<1,
$$
we obtain
\allowdisplaybreaks{
	\begin{align*}
	& \sum_{n=1}^\infty 2^{2n-1} z^{n-1} = \frac{d}{dz} \ln\left( 1+\sum_{n=1}^\infty \binom{2n}{n} z^n \right) 
	= \frac{d}{dz} \sum_{j=1}^\infty \frac{(-1)^{j-1}}{j} \left( \sum_{n=1}^\infty \binom{2n}{n} z^n \right)^j\\
	& = \frac{d}{dz} \sum_{j=1}^\infty \frac{(-1)^{j-1}}{j} \sum_{n=1}^\infty \left( \sum_{\substack{t_1+t_2+\cdots+t_n=j\\t_1+2t_2+\cdots+nt_n=n}} 
	\binom{t_1+t_2+\cdots+t_n}{t_1,t_2,\ldots,t_n} \prod_{i=1}^n \binom{2i}{i}^{t_i} \right) z^n\\
	& = \frac{d}{dz} \sum_{n=1}^\infty \left( \sum_{t_1+2t_2+\cdots+nt_n=n} \frac{(-1)^{1+t_1+t_2+\cdots+t_n}}{t_1+t_2+\cdots+t_n} \binom{t_1+t_2+\cdots+t_n}{t_1,t_2,\ldots,t_n}\times\right. \\
	& \qquad\qquad \left. \times \prod_{i=1}^n \left( \frac{2(2i-1)}{i}\right) ^{t_i+t_{i+1}+\cdots+t_n} \right)  z^n\\
	& = \frac{d}{dz} \sum_{n=1}^\infty \left( \sum_{\substack{\lambda_1+\lambda_2+\cdots+\lambda_n=n\\\lambda_1\geqslant \lambda_2\geqslant \cdots \geqslant \lambda_n\geqslant 0}} \frac{(-1)^{1+\lambda_1}}{\lambda_1} \binom{\lambda_1}{\lambda_2} \binom{\lambda_2}{\lambda_3} \cdots \binom{\lambda_n}{0} \prod_{i=1}^{n} \left( \frac{2(2i-1)}{i}\right) ^{\lambda_i}  \right) z^n\\
	& = \sum_{n=1}^\infty \left( \sum_{\substack{\lambda_1+\lambda_2+\cdots+\lambda_n=n\\\lambda_1\geqslant \lambda_2\geqslant \cdots \geqslant \lambda_n\geqslant 0}} (-1)^{1+\lambda_1}  \frac{n 2^n}{\lambda_1}
	\binom{\lambda_1}{\lambda_2} \binom{\lambda_2}{\lambda_3} \cdots \binom{\lambda_n}{0}
	\prod_{i=1}^{n} \left( \frac{2i-1}{i}\right) ^{\lambda_i}
	\right)  z^{n-1}
	\end{align*}}
and
\allowdisplaybreaks{
	\begin{align*}
	& \sum_{n=1}^\infty (-1)^{n-1} 2^{2n-1} z^{n-1} = \frac{d}{dz} \ln\left( 1+\sum_{n=1}^\infty \frac{(-1)^{n-1}}{2n-1} \binom{2n}{n} z^n \right) \\
	& = \frac{d}{dz} \sum_{j=1}^\infty \frac{(-1)^{j-1}}{j} \left( \sum_{n=1}^\infty \frac{(-1)^{n-1}}{2n-1} \binom{2n}{n} z^n \right)^j\\
	& = \frac{d}{dz} \sum_{j=1}^\infty \frac{(-1)^{j-1}}{j} \sum_{n=1}^\infty \left( \sum_{\substack{t_1+t_2+\cdots+t_n=j\\t_1+2t_2+\cdots+nt_n=n}} 
	\binom{t_1+t_2+\cdots+t_n}{t_1,t_2,\ldots,t_n} \times\right. \\
	& \qquad\qquad \left. \times \prod_{i=1}^n \left(\frac{(-1)^{i-1}}{2i-1} \binom{2i}{i}\right)^{t_i} \right) z^n\\
	& = \frac{d}{dz} \sum_{n=1}^\infty \left( \sum_{t_1+2t_2+\cdots+nt_n=n} \frac{(-1)^{n-1}}{t_1+t_2+\cdots+t_n} \binom{t_1+t_2+\cdots+t_n}{t_1,t_2,\ldots,t_n} \times \right. \\
	& \qquad\qquad \left. \times \prod_{i=1}^n \left( \frac{2|2i-3|}{i}\right) ^{t_i+t_{i+1}+\cdots+t_n} \right)  z^n\\
	& = \frac{d}{dz} \sum_{n=1}^\infty \left( \sum_{\substack{\lambda_1+\lambda_2+\cdots+\lambda_n=n\\\lambda_1\geqslant \lambda_2\geqslant \cdots \geqslant \lambda_n\geqslant 0}} \frac{(-1)^{n-1}}{\lambda_1} 
	\binom{\lambda_1}{\lambda_2} \binom{\lambda_2}{\lambda_3} \cdots \binom{\lambda_n}{0} 
	\prod_{i=1}^{n} \left( \frac{2|2i-3|}{i}\right) ^{\lambda_i}  \right) z^n\\
	& = \sum_{n=1}^\infty \left( \sum_{\substack{\lambda_1+\lambda_2+\cdots+\lambda_n=n\\\lambda_1\geqslant \lambda_2\geqslant \cdots \geqslant \lambda_n\geqslant 0}} (-1)^{n-1}  \frac{n 2^n}{\lambda_1}
	\binom{\lambda_1}{\lambda_2} \binom{\lambda_2}{\lambda_3} \cdots \binom{\lambda_n}{0}
	\prod_{i=1}^{n} \left( \frac{|2i-3|}{i}\right) ^{\lambda_i}  \right)   z^{n-1}.
	\end{align*}
	The proof follows easily by equating the coefficient of $z^{n-1}$ of these identities.
}

\section{Open problems and concluding remarks}
\label{S4}

As we can see in Theorems \ref{T1} and \ref{T1a}, a connection between binomial coefficients and multinomial coefficients due to N. J. Fine \eqref{EQ6} can be used to rewrite
some well known decompositions of the powers of two as sums over integer partitions. Similar results are obtained in Theorems \ref{T2}, considering the logarithmic differentiation.

The identities of Theorem \ref{T1a} can be seen as bisections of the identities of Theorem \ref{T1}. Inspired by this fact, we tried to approach Theorem \ref{T2} from this point of view. In this way, we remark that there is a substantial amount of numerical evidence to state the following conjecture.

\begin{conjecture} \label{C1}
	For $n>1$,
	\begin{enumerate}
		\item $\displaystyle{\sum_{\substack{\lambda_1+\lambda_2+\cdots+\lambda_n=n\\\lambda_1\geqslant \lambda_2\geqslant \cdots \geqslant \lambda_n\geqslant 0\\\lambda_1\text{ odd}}} \binom{\lambda_1}{\lambda_2} \binom{\lambda_2}{\lambda_3} \cdots \binom{\lambda_n}{0} \frac{n}{\lambda_1} \frac{1^{\lambda_1}1^{\lambda_2}3^{\lambda_3}\cdots(2n-3)^{\lambda_n}}
			{1^{\lambda_1}2^{\lambda_2}\cdots n^{\lambda_n}} > 2^{n-2}};$
		\item $\displaystyle{\sum_{\substack{\lambda_1+\lambda_2+\cdots+\lambda_n=n\\\lambda_1\geqslant \lambda_2\geqslant \cdots \geqslant \lambda_n\geqslant 0\\\lambda_1\text{ even}}} \binom{\lambda_1}{\lambda_2} \binom{\lambda_2}{\lambda_3} \cdots \binom{\lambda_n}{0} \frac{n}{\lambda_1} \frac{1^{\lambda_1}1^{\lambda_2}3^{\lambda_3}\cdots(2n-3)^{\lambda_n}}
			{1^{\lambda_1}2^{\lambda_2}\cdots n^{\lambda_n}} < 2^{n-2}};$
		\item $\displaystyle{\sum_{\substack{\lambda_1+\lambda_2+\cdots+\lambda_n=n\\\lambda_1\geqslant \lambda_2\geqslant \cdots \geqslant \lambda_n\geqslant 0\\\lambda_1\text{ odd}}} \binom{\lambda_1}{\lambda_2} \binom{\lambda_2}{\lambda_3} \cdots \binom{\lambda_n}{0} \frac{n}{\lambda_1} \frac{1^{\lambda_1}3^{\lambda_2}\cdots(2n-1)^{\lambda_n}}
			{1^{\lambda_1}2^{\lambda_2}\cdots n^{\lambda_n}}> 2^{n}};$
		\item $\displaystyle{\sum_{\substack{\lambda_1+\lambda_2+\cdots+\lambda_n=n\\\lambda_1\geqslant \lambda_2\geqslant \cdots \geqslant \lambda_n\geqslant 0\\\lambda_1\text{ even}}} \binom{\lambda_1}{\lambda_2} \binom{\lambda_2}{\lambda_3} \cdots \binom{\lambda_n}{0} \frac{n}{\lambda_1} \frac{1^{\lambda_1}3^{\lambda_2}\cdots(2n-1)^{\lambda_n}}
			{1^{\lambda_1}2^{\lambda_2}\cdots n^{\lambda_n}}> 2^{n-1}}.$		
	\end{enumerate}
\end{conjecture}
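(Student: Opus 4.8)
The plan is to bring the four sums of Conjecture~\ref{C1} back to the logarithmic generating functions of Section~\ref{S3} and to perform the bisection there. The key point is that in the expansion $\ln\!\big(1+F(z)\big)=\sum_{j\ge1}\frac{(-1)^{j-1}}{j}F(z)^{j}$ used in that section, the summation index $j$ equals $t_{1}+\cdots+t_{n}$, hence equals $\lambda_{1}$ after conjugation; so splitting the sums of Theorem~\ref{T2} by the parity of $\lambda_{1}$ is exactly splitting $\ln(1+F)$ into its odd part and its even part in $j$. Since $\sum_{j\text{ odd}}x^{j}/j=\tfrac12\ln\frac{1+x}{1-x}$ and $\sum_{j\text{ even}}(-x^{j}/j)=\tfrac12\ln(1-x^{2})$, and since on odd $j$ the weights $(-1)^{j-1}/j$ and $1/j$ coincide, the odd-$\lambda_1$ part of each identity of Theorem~\ref{T2} has generating function $\tfrac12\frac{d}{dz}\ln\frac{1+F(z)}{1-F(z)}$ while the even-$\lambda_1$ part has generating function $\tfrac12\frac{d}{dz}\ln\!\big(1-F(z)^{2}\big)$, with $F$ the very series used in Section~\ref{S3}; in particular $1+F(z)=\sqrt{1+4z}$ for parts (1)--(2) and $1+F(z)=1/\sqrt{1-4z}$ for parts (3)--(4).

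Running verbatim the substitutions and coefficient comparison of Section~\ref{S3} on these modified logarithms produces closed forms. Writing $S_{o},S_{e}$ for the sums in (1),(2) and $S_{o}',S_{e}'$ for those in (3),(4): for (1)--(2), with $1+F=\sqrt{1+4z}$ one has
\[
\tfrac12\tfrac{d}{dz}\ln\tfrac{1+F}{1-F}=\frac{1}{1+4z}+\frac{1}{2\sqrt{1+4z}}+\frac{1}{2\,(2-\sqrt{1+4z})},
\]
so, using $[z^{m}]\frac{1}{1+4z}=(-4)^{m}$, $[z^{m}]\frac{1}{\sqrt{1+4z}}=(-1)^{m}\binom{2m}{m}$, and $\frac{1}{2-\sqrt{1+4z}}=\frac{2+\sqrt{1+4z}}{3-4z}=:\sum_{m\ge0}b_{m}z^{m}$,
\[
S_{o}=2^{\,n-2}+\frac{1}{2^{\,n+1}}\Big(\binom{2n-2}{n-1}+(-1)^{n-1}b_{n-1}\Big),\qquad S_{e}=2^{\,n-1}-S_{o},
\]
so that (1) and (2) are each equivalent to $\binom{2n-2}{n-1}>(-1)^{n}b_{n-1}$. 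For (3)--(4), with $1+F=1/\sqrt{1-4z}$ a short computation collapses the odd part to $\tfrac12\tfrac{d}{dz}\ln\tfrac{1+F}{1-F}=\dfrac{2}{\sqrt{1-4z}\,(2\sqrt{1-4z}-1)}$, whence $S_{o}'=2^{-n}[z^{n-1}]\dfrac{2}{\sqrt{1-4z}\,(2\sqrt{1-4z}-1)}$ and $S_{e}'=S_{o}'-2^{\,n-1}$, so (3) and (4) are each equivalent to $[z^{n-1}]\dfrac{2}{\sqrt{1-4z}\,(2\sqrt{1-4z}-1)}>4^{\,n}$.

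It then remains to prove these two coefficient inequalities, for which I would use singularity analysis. The second is the more transparent: $\dfrac{2}{\sqrt{1-4z}(2\sqrt{1-4z}-1)}$ has a simple pole at $z=3/16$ (where $2\sqrt{1-4z}=1$), strictly inside the branch point $z=1/4$ of $\sqrt{1-4z}$; its residue contributes $\tfrac83(16/3)^{\,n-1}$ to the $(n-1)$st coefficient, while the branch point contributes only $O\!\big(4^{\,n-1}n^{-3/2}\big)$, so that coefficient equals $\tfrac83(16/3)^{\,n-1}\big(1+o(1)\big)$, which far exceeds $4^{\,n}$. For the first inequality, the dominant singularity of $\dfrac{2+\sqrt{1+4z}}{3-4z}$ is the branch point at $z=-1/4$ (the pole at $z=3/4$ being farther out), whence $b_{n-1}=O\!\big(4^{\,n-1}n^{-3/2}\big)$, eventually far smaller than $\binom{2n-2}{n-1}\sim 4^{\,n-1}(\pi n)^{-1/2}$. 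The main obstacle is to upgrade these asymptotic comparisons to inequalities holding for \emph{every} admissible $n$, which calls for an effective transfer estimate with explicit constants (or a short induction bounding $|b_{n-1}|$ against $\binom{2n-2}{n-1}$) together with a direct check of the finitely many small cases. A byproduct of the reduction is that the intended range is $n\ge3$: at $n=2$ the formulas above give $S_{o}=S_{e}=2^{\,n-2}$ and $S_{o}'=3<2^{\,n}$, equalities and boundary cases rather than the claimed strict inequalities.
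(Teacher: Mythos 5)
First, note that the paper offers no proof of this statement at all: it is stated as Conjecture~\ref{C1}, supported only by numerical evidence, so there is no argument of the author's to compare yours against. Your reduction itself is sound and is, in fact, a genuine contribution beyond what the paper does. Since $\lambda_1$ of the conjugate partition equals the number of parts $j=t_1+\cdots+t_n$, splitting by the parity of $\lambda_1$ is indeed the odd/even bisection of the index $j$ in $\ln(1+F)=\sum_{j\ge1}\tfrac{(-1)^{j-1}}{j}F^j$, and your closed forms check out: I verified that $\tfrac12\tfrac{d}{dz}\ln\tfrac{1+F}{1-F}=\tfrac{2}{\sqrt{1-4z}\,(2\sqrt{1-4z}-1)}$ when $1+F=(1-4z)^{-1/2}$, that your three-term expression is correct when $1+F=\sqrt{1+4z}$, and that the resulting formulas reproduce the correct values at $n=2,3$ (e.g. $S_o=5/2$, $S_o'=17/2$, $[z^2]\tfrac{2}{\sqrt{1-4z}(2\sqrt{1-4z}-1)}=68>64$). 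Your observation that the conjecture as stated for $n>1$ actually fails at $n=2$ (where $S_o=S_e=2^{n-2}$ and $S_o'=3<2^n$, $S_e'=1<2^{n-1}$) is correct and worth recording: the inequalities can only hold for $n\ge3$.

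The genuine gap is exactly where you flag it: the two coefficient inequalities to which everything is reduced, namely $\binom{2n-2}{n-1}>(-1)^n b_{n-1}$ and $[z^{n-1}]\tfrac{2}{\sqrt{1-4z}\,(2\sqrt{1-4z}-1)}>4^n$, are only argued asymptotically via singularity analysis, with no effective constants and no treatment of small $n$; as written this does not prove the statement for any specific $n$, let alone all $n\ge3$. The gap looks fillable by elementary means, and you should close it rather than appeal to transfer theorems. For parts (3)--(4), write $\tfrac{2}{u(2u-1)}=\tfrac{4}{3-16z}+\tfrac{2}{\sqrt{1-4z}\,(3-16z)}$ with $u=\sqrt{1-4z}$; both summands have nonnegative coefficients, giving the explicit lower bound $[z^{n-1}]\ge \tfrac43\left(\tfrac{16}{3}\right)^{n-1}+\tfrac23\sum_{k=0}^{n-1}\binom{2k}{k}\left(\tfrac{16}{3}\right)^{n-1-k}$, from which $>4^n$ follows for all $n\ge3$ with a short estimate. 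For parts (1)--(2), expand $b_m=[z^m]\left(\tfrac{2}{3-4z}+\tfrac{\sqrt{1+4z}}{3-4z}\right)$ and bound the alternating convolution $\tfrac13\sum_{k\le m}(-1)^{k-1}\tfrac{1}{2k-1}\binom{2k}{k}(4/3)^{m-k}$ by its leading terms to get $|b_{n-1}|<\binom{2n-2}{n-1}$ for $n\ge3$, plus a check of the first few cases. Until those effective bounds are written out, the argument is a correct reduction and a plausible plan, not a proof.
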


The first identity of Theorem \ref{T2} can be considered an analogy of the second identity of Theorem \ref{T1}. In this context, the first identity of the following theorem can be considered an analogy of the first identity of Theorem \ref{T1}.

\begin{theorem} \label{T4}
	For $n>0$,
	\begin{enumerate}
		\item $\displaystyle{\sum_{\substack{\lambda_1+\lambda_2+\cdots+\lambda_n=n\\\lambda_1\geqslant \lambda_2\geqslant \cdots \geqslant \lambda_n\geqslant 0}} \binom{\lambda_1}{\lambda_2} \binom{\lambda_2}{\lambda_3} \cdots \binom{\lambda_n}{0}  \frac{1^{\lambda_1}1^{\lambda_2}3^{\lambda_3}\cdots(2n-3)^{\lambda_n}}
			{1^{\lambda_1}2^{\lambda_2}\cdots n^{\lambda_n}}= \frac{(2n-1)!!}{n!}};$
		\item $\displaystyle{\sum_{\substack{\lambda_1+\lambda_2+\cdots+\lambda_n=n\\\lambda_1\geqslant \lambda_2\geqslant \cdots \geqslant \lambda_n\geqslant 0}} (-1)^{1+\lambda_1}\binom{\lambda_1}{\lambda_2} \binom{\lambda_2}{\lambda_3} \cdots \binom{\lambda_n}{0}  \frac{1^{\lambda_1}3^{\lambda_2}\cdots(2n-1)^{\lambda_n}}
			{1^{\lambda_1}2^{\lambda_2}\cdots n^{\lambda_n}}= \frac{|2n-3|!!}{n!}}.$
	\end{enumerate}
\end{theorem}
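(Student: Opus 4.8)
The plan is to reuse the mechanism of the proof of Theorem~\ref{T2} with one change: there one expands $\ln(1+w)=\sum_{j\geqslant 1}\frac{(-1)^{j-1}}{j}w^{j}$, whereas here one expands instead the closely related series $\frac{w}{1+w}=\sum_{j\geqslant 1}(-1)^{j-1}w^{j}$, valid for any formal power series $w$ with zero constant term (and for $|w|<1$). Dropping the factor $1/j$ is exactly what turns the weight $\frac{(-1)^{1+\lambda_{1}}}{\lambda_{1}}$ appearing in Theorem~\ref{T2} into the bare product of binomial coefficients appearing in Theorem~\ref{T4}.

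First I would choose, for part~(1), $w(z)=\sum_{n\geqslant 1}\frac{(-1)^{n-1}}{2n-1}\binom{2n}{n}z^{n}$ and, for part~(2), $w(z)=\sum_{n\geqslant 1}\binom{2n}{n}z^{n}$. Using the generating functions recalled at the beginning of Section~\ref{S3}, together with the substitution $z\mapsto -z$ in $\sum_{n\geqslant 0}\frac{1}{2n-1}\binom{2n}{n}z^{n}=-\sqrt{1-4z}$, one gets $1+w(z)=\sqrt{1+4z}$ in the first case and $1+w(z)=1/\sqrt{1-4z}$ in the second. Consequently $\frac{w}{1+w}=1-\frac{1}{\sqrt{1+4z}}=\sum_{n\geqslant 1}(-1)^{n-1}\binom{2n}{n}z^{n}$ in the first case and $\frac{w}{1+w}=1-\sqrt{1-4z}=\sum_{n\geqslant 1}\frac{1}{2n-1}\binom{2n}{n}z^{n}$ in the second.

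Next I would compute $\frac{w}{1+w}=\sum_{j\geqslant 1}(-1)^{j-1}w^{j}$ the long way, step by step as in Section~\ref{S3}: expand $w^{j}$ by the multinomial theorem, interchange the sums over $j$ and $n$, and pass to the conjugate partition $\lambda=(\lambda_{1},\ldots,\lambda_{n})$ of $t_{1}+2t_{2}+\cdots+nt_{n}=n$, so that $\lambda_{i}=t_{i}+\cdots+t_{n}$, $\lambda_{1}=j=t_{1}+\cdots+t_{n}$, and $\binom{j}{t_{1},\ldots,t_{n}}=\binom{\lambda_{1}}{\lambda_{2}}\binom{\lambda_{2}}{\lambda_{3}}\cdots\binom{\lambda_{n}}{0}$ by \eqref{EQ5}. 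The weights $\prod_{i=1}^{n}(\cdot)^{t_{i}}$ turn into $\prod_{m=1}^{n}(\cdot)^{\lambda_{m}}$ via the telescoping identities $\binom{2i}{i}=\prod_{m=1}^{i}\frac{2(2m-1)}{m}$ and $\frac{(-1)^{i-1}}{2i-1}\binom{2i}{i}=\prod_{m=1}^{i}\frac{-2(2m-3)}{m}$, each checked by a one-line ratio computation (the second holds for $i=1$ as well, since $\frac{-2(2\cdot 1-3)}{1}=2$). This yields $\prod_{i=1}^{n}\bigl(\tfrac{2(2i-1)}{i}\bigr)^{\lambda_{i}}=2^{n}\,\frac{1^{\lambda_{1}}3^{\lambda_{2}}5^{\lambda_{3}}\cdots(2n-1)^{\lambda_{n}}}{1^{\lambda_{1}}2^{\lambda_{2}}\cdots n^{\lambda_{n}}}$ and $\prod_{i=1}^{n}\bigl(\tfrac{-2(2i-3)}{i}\bigr)^{\lambda_{i}}=(-1)^{n+\lambda_{1}}\,2^{n}\,\frac{1^{\lambda_{1}}1^{\lambda_{2}}3^{\lambda_{3}}\cdots(2n-3)^{\lambda_{n}}}{1^{\lambda_{1}}2^{\lambda_{2}}\cdots n^{\lambda_{n}}}$, where in the latter the sign $(-1)^{\lambda_{1}}$ comes from the single negative factor $2\cdot 1-3=-1$ and $(-1)^{n}$ from $\lambda_{1}+\cdots+\lambda_{n}=n$.

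Finally I would equate the coefficient of $z^{n}$ in the two expressions for $\frac{w}{1+w}$. In part~(2) the sign $(-1)^{j-1}=(-1)^{1+\lambda_{1}}$ coming from $\frac{w}{1+w}=\sum_{j\geqslant 1}(-1)^{j-1}w^{j}$ survives; dividing by $2^{n}$ and using $\frac{1}{2n-1}\binom{2n}{n}=\frac{2^{n}|2n-3|!!}{n!}$ (with the convention $(-1)!!=1$, needed only for $n=1$) gives the claimed identity. In part~(1) that sign $(-1)^{1+\lambda_{1}}$ combines with $(-1)^{n+\lambda_{1}}$ to produce $(-1)^{n-1}$, which matches $(-1)^{n-1}\binom{2n}{n}$; after dividing by $(-1)^{n-1}2^{n}$ and using $\binom{2n}{n}=\frac{2^{n}(2n-1)!!}{n!}$ one gets the claimed identity. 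The one point that needs care is this sign bookkeeping in part~(1), forced by the negative factor $2\cdot 1-3=-1$; the remaining manipulations—interchanging the double sums over $j$ and $n$ and converting the $t$-sums to $\lambda$-sums—are the same routine formal (equivalently, convergent) power-series rearrangements already performed in Section~\ref{S3}.
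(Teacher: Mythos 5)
Your argument is correct, and it is a genuinely different route from the paper's. The paper proves Theorem \ref{T4} by a short reduction to an external result: it records the convolution $\sum_{k=0}^{n}\frac{-1}{2k-1}\binom{2k}{k}\binom{2n-2k}{n-k}=\delta_{0,n}$ (from $\frac{1}{\sqrt{1-4z}}\cdot\sqrt{1-4z}=1$) and then invokes \cite[Theorem 1]{Merca} with $a_n=\frac{(-1)^{n-1}}{2^n(2n-1)}\binom{2n}{n}$, $b_n=\frac{1}{2^n}\binom{2n}{n}$, so the partition-sum form of the identities is delivered wholesale by that symmetric-function theorem. You instead rerun the Section \ref{S3} machinery with $\frac{w}{1+w}=\sum_{j\geqslant 1}(-1)^{j-1}w^{j}$ in place of $\ln(1+w)$, which cleanly explains why Theorem \ref{T4} carries the same partition weights as Theorem \ref{T2} but without the $n/\lambda_1$ factor (the $1/j$ is dropped), and all your steps check out: $1+w=\sqrt{1+4z}$ resp. $1/\sqrt{1-4z}$, the telescoping identities $\binom{2i}{i}=\prod_{m=1}^{i}\frac{2(2m-1)}{m}$ and $\frac{(-1)^{i-1}}{2i-1}\binom{2i}{i}=\prod_{m=1}^{i}\frac{-2(2m-3)}{m}$, the conjugate-partition conversion via \eqref{EQ5}, the sign bookkeeping $(-1)^{1+\lambda_1}(-1)^{n+\lambda_1}=(-1)^{n-1}$ in part (1), and the closed forms $\binom{2n}{n}=\frac{2^n(2n-1)!!}{n!}$, $\frac{1}{2n-1}\binom{2n}{n}=\frac{2^n|2n-3|!!}{n!}$ (for $n=1$ the absolute value already gives $1!!=1$, so no $(-1)!!$ convention is really needed). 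What each approach buys: the paper's proof is two lines once the cited theorem is granted and places the result inside the complete/elementary symmetric function duality, while yours is self-contained, purely formal-power-series (no logarithm or differentiation needed since $w$ has zero constant term), and makes the structural kinship with Theorem \ref{T2} explicit; at bottom both exploit the same reciprocal pair $\sqrt{1-4z}$ and $1/\sqrt{1-4z}$.
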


\begin{proof}
	For $n>0 $, we remark that
	$$ \frac{(2n-1)!!}{n!} = \frac{1}{2^n} \binom{2n}{n}\qquad\text{and}\qquad
	\frac{|2n-3|!!}{n!} = \frac{1}{2^n (2n-1)} \binom{2n}{n}.$$
	Equating the coefficient of $z^n$ in
	$$
	\left( \sum\limits_{n=0}^\infty \binom{2n}{n} z^n \right) 
	\left( \sum\limits_{n=0}^\infty \frac{-1}{2n-1} \binom{2n}{n} z^n \right) 
	=\frac{1}{\sqrt{1-4z}} \cdot \sqrt{1-4z} = 1
	$$
	we obtain
	$$
	\sum_{k=0}^n \frac{-1}{2k-1} \binom{2k}{k} \binom{2n-2k}{n-k} = \delta_{0,n},
	$$
	where $\delta_{i,j}$ is the Kronecker delta function.
	Rewriting this identity as
	$$
	\sum_{k=0}^n (-1)^k \frac{(-1)^{k-1}}{2^k (2k-1)}\binom{2k}{k}  \frac{1}{2^{n-k}}\binom{2n-2k}{n-k} = \delta_{0,n},
	$$
	we see that our theorem is the case
	$$a_n = \frac{(-1)^{n-1}}{2^n (2n-1)} \binom{2n}{n} \qquad\text{and}\qquad b_n= \frac{1}{2^{n}}\binom{2n}{n}$$
	of \cite[Theorem 1]{Merca}.  
\end{proof}

We experimentally discover the following analogy of the third identity of Theorem \ref{T1}.

\begin{conjecture} \label{C}
	For $n>0$,
	\begin{enumerate}
		\item $\displaystyle{\sum_{\substack{\lambda_1+\lambda_2+\cdots+\lambda_n=n\\\lambda_1\geqslant \lambda_2\geqslant \cdots \geqslant \lambda_n\geqslant 0}} \binom{\lambda_1}{\lambda_2} \binom{\lambda_2}{\lambda_3} \cdots \binom{\lambda_n}{0} \frac{\lambda_1}{n+1} \frac{1^{\lambda_1}1^{\lambda_2}3^{\lambda_3}\cdots(2n-3)^{\lambda_n}}
			{1^{\lambda_1}2^{\lambda_2}\cdots n^{\lambda_n}}= \frac{(2n)!!-(2n-1)!!}{(n+1)!};}$
		\item $\displaystyle{\sum_{\substack{\lambda_1+\lambda_2+\cdots+\lambda_n=n\\\lambda_1\geqslant \lambda_2\geqslant \cdots \geqslant \lambda_n\geqslant 0}} (-1)^{1+\lambda_1}\binom{\lambda_1}{\lambda_2} \binom{\lambda_2}{\lambda_3} \cdots \binom{\lambda_n}{0} \frac{\lambda_1}{n+1} \frac{1^{\lambda_1}3^{\lambda_2}\cdots(2n-1)^{\lambda_n}}
			{1^{\lambda_1}2^{\lambda_2}\cdots n^{\lambda_n}}= -\frac{(2n-3)!!}{(n+1)!}}.$
	\end{enumerate}
\end{conjecture}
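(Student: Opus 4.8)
To prove the two identities of Conjecture~\ref{C}, the plan is to repeat the generating-function argument of Section~\ref{S3} with two changes: replace the logarithmic series $\ln(1+x)=\sum_{j\geqslant1}\frac{(-1)^{j-1}}{j}x^{j}$ by
\[
\sum_{j\geqslant1}(-1)^{j-1}j\,x^{j}=\frac{x}{(1+x)^{2}},
\]
which is a valid identity of formal power series when $x$ has zero constant term, and replace the differentiation $\frac{d}{dz}$ by the integration $\int_{0}^{z}(\,\cdot\,)\,dw$. After the multinomial expansion and the passage to the conjugate partition — where the outer index becomes $j=t_{1}+\cdots+t_{n}=\lambda_{1}$, exactly as in the proof of Theorem~\ref{T2} — the extra factor $j$ produces a weight $\lambda_{1}$ in place of the $\frac1{\lambda_{1}}$ that occurred there, while the integration turns $[z^{n}](\,\cdot\,)$ into $\frac1{n+1}[z^{n+1}](\,\cdot\,)$. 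Together these account precisely for the factor $\frac{\lambda_{1}}{n+1}$ appearing in both identities.

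For the second identity I would take $g(z)=\sum_{n\geqslant1}\binom{2n}{n}z^{n}=(1-4z)^{-1/2}-1$, so that $1+g(z)=(1-4z)^{-1/2}$ and hence
\[
\frac{g(z)}{(1+g(z))^{2}}=(1-4z)^{1/2}-(1-4z),\qquad
\int_{0}^{z}\frac{g(w)}{(1+g(w))^{2}}\,dw=\frac16-\frac16(1-4z)^{3/2}-z+2z^{2}.
\]
On the other hand, writing $\frac{g}{(1+g)^{2}}=\sum_{j\geqslant1}(-1)^{j-1}j\,g^{j}$, expanding each $g^{j}$ by the multinomial theorem as in the proof of Theorem~\ref{T2}, using $\binom{2k}{k}=\prod_{\ell=1}^{k}\frac{2(2\ell-1)}{\ell}$ to rewrite $\prod_{i=1}^{n}\binom{2i}{i}^{t_{i}}=\prod_{k=1}^{n}\bigl(\tfrac{2(2k-1)}{k}\bigr)^{t_{k}+\cdots+t_{n}}$, and conjugating ($\lambda_{k}=t_{k}+\cdots+t_{n}$), one obtains
\[
\frac{g(z)}{(1+g(z))^{2}}=\sum_{n\geqslant1}2^{n}\Bigl(\,\sum_{\substack{\lambda_{1}+\cdots+\lambda_{n}=n\\ \lambda_{1}\geqslant\cdots\geqslant\lambda_{n}\geqslant0}}(-1)^{1+\lambda_{1}}\lambda_{1}\binom{\lambda_{1}}{\lambda_{2}}\cdots\binom{\lambda_{n}}{0}\prod_{k=1}^{n}\Bigl(\tfrac{2k-1}{k}\Bigr)^{\lambda_{k}}\Bigr)z^{n},
\]
where the $2^{n}$ comes from $\prod_{k}2^{\lambda_{k}}=2^{\lambda_{1}+\cdots+\lambda_{n}}$. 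Integrating and equating coefficients of $z^{n+1}$ then identifies the left side of the second identity with $2^{-n}[z^{n+1}]\bigl(\tfrac16-\tfrac16(1-4z)^{3/2}-z+2z^{2}\bigr)$; for $n\geqslant2$ a routine computation with the generalized binomial series gives $[z^{n+1}](1-4z)^{3/2}=\tfrac{3\cdot2^{n+1}(2n-3)!!}{(n+1)!}$, which yields the claimed value $-\tfrac{(2n-3)!!}{(n+1)!}$.

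The first identity is obtained symmetrically from $h(z)=\sum_{n\geqslant1}\frac{(-1)^{n-1}}{2n-1}\binom{2n}{n}z^{n}=(1+4z)^{1/2}-1$. Here $\frac{h}{(1+h)^{2}}=\frac{1}{\sqrt{1+4z}}-\frac{1}{1+4z}$, which integrates to $\frac12\sqrt{1+4z}-\frac14\ln(1+4z)-\frac12$, while from $\frac{(-1)^{k-1}}{2k-1}\binom{2k}{k}\big/\frac{(-1)^{k-2}}{2k-3}\binom{2k-2}{k-1}=-\frac{2(2k-3)}{k}$ (for $k\geqslant2$; the $k=1$ factor is $\binom{2}{1}=2$) one gets $\prod_{i=1}^{n}\bigl(\tfrac{(-1)^{i-1}}{2i-1}\binom{2i}{i}\bigr)^{t_{i}}=(-1)^{\lambda_{2}+\cdots+\lambda_{n}}\prod_{k=1}^{n}\bigl(\tfrac{2|2k-3|}{k}\bigr)^{\lambda_{k}}$, with $(-1)^{\lambda_{2}+\cdots+\lambda_{n}}=(-1)^{n-\lambda_{1}}$. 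Multiplied by the $(-1)^{\lambda_{1}-1}$ coming from the series this gives a global sign $(-1)^{n-1}$, which cancels the $(-1)^{n-1}$ arising when $[z^{n+1}]\bigl(\tfrac12\sqrt{1+4z}-\tfrac14\ln(1+4z)\bigr)=(-1)^{n-1}2^{n}\tfrac{(2n)!!-(2n-1)!!}{(n+1)!}$ is read off, and leaves $\frac{(2n)!!-(2n-1)!!}{(n+1)!}$.

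The sign bookkeeping in the conjugation step is the routine part: it is essentially verbatim the computation already carried out in the proof of Theorem~\ref{T2}, so it can be quoted. The point that requires real care is the low-degree behaviour of the integrated generating functions. Since $\int_{0}^{z}g/(1+g)^{2}\,dw$ carries the polynomial tail $-z+2z^{2}$, the identification of the second sum with $-\tfrac{(2n-3)!!}{(n+1)!}$ is valid only for $n\geqslant2$; for $n=1$ one checks directly that the left side equals $\tfrac12$, so that case must be stated or excluded separately. For the first identity the integrated generating function differs from a clean closed form only by the additive constant $-\tfrac12$, which does not affect $[z^{n+1}]$ for $n\geqslant1$, so that identity holds exactly as stated for all $n>0$.
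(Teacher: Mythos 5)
The statement you were asked to prove is not proved in the paper at all: it appears only as Conjecture~\ref{C}, supported by numerical evidence, so there is no proof of record to compare yours against --- your argument, which I have checked, actually settles it. Your plan is the natural continuation of the paper's Section~\ref{S3} method: replacing $\ln(1+x)=\sum_{j\geqslant 1}\tfrac{(-1)^{j-1}}{j}x^{j}$ by $\tfrac{x}{(1+x)^{2}}=\sum_{j\geqslant 1}(-1)^{j-1}j\,x^{j}$ turns the weight $1/\lambda_{1}$ of Theorem~\ref{T2} into $\lambda_{1}$, and replacing differentiation by integration produces the $1/(n+1)$, while the multinomial expansion, the conjugation $\lambda_{k}=t_{k}+\cdots+t_{n}$ and the sign identity $(-1)^{\lambda_{2}+\cdots+\lambda_{n}}=(-1)^{n-\lambda_{1}}$ are verbatim the steps of the paper's proof of Theorem~\ref{T2}, so quoting them is legitimate. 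Your closed forms are correct, namely $g/(1+g)^{2}=(1-4z)^{1/2}-(1-4z)$ for $g=(1-4z)^{-1/2}-1$ and $h/(1+h)^{2}=(1+4z)^{-1/2}-(1+4z)^{-1}$ for $h=(1+4z)^{1/2}-1$, and the coefficient extractions $[z^{n+1}](1-4z)^{3/2}=3\cdot 2^{n+1}(2n-3)!!/(n+1)!$ and $[z^{n+1}]\bigl(\tfrac12\sqrt{1+4z}-\tfrac14\ln(1+4z)\bigr)=(-1)^{n-1}2^{n}\bigl((2n)!!-(2n-1)!!\bigr)/(n+1)!$ check out; I also verified the resulting identities against the partition sums for $n=1,2$. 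Your caveat about $n=1$ in the second identity is not a blemish but a genuine finding: because of the polynomial tail $-z+2z^{2}$, the left-hand side equals $\tfrac12$ at $n=1$, while the stated right-hand side is $-(-1)!!/2=-\tfrac12$ under the usual convention $(-1)!!=1$; so the second identity of Conjecture~\ref{C} is correct only for $n\geqslant 2$ (compare Theorem~\ref{T4}, where the author writes $|2n-3|!!$ precisely to avoid this issue), whereas the first identity holds for all $n>0$ as stated. If you write this up, state that restriction explicitly, and note that combining your result with Theorem~\ref{T4} immediately yields Conjecture~\ref{C2} for $n>1$ as well.
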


Considering Theorem \ref{T4} and assuming Conjecture \ref{C}, we obtain the following sums over partitions.

\begin{conjecture} \label{C2}
	For $n>1$,
	\begin{enumerate}
		\item $\displaystyle{\sum_{\substack{\lambda_1+\lambda_2+\cdots+\lambda_n=n\\\lambda_1\geqslant \lambda_2\geqslant \cdots \geqslant \lambda_n\geqslant 0}} (1+\lambda_1)
			\binom{\lambda_1}{\lambda_2} \binom{\lambda_2}{\lambda_3} \cdots \binom{\lambda_n}{0}  \frac{1^{\lambda_1}1^{\lambda_2}3^{\lambda_3}\cdots(2n-3)^{\lambda_n}}
			{1^{\lambda_1}2^{\lambda_2}\cdots n^{\lambda_n}}= 2^{n};}$
		\item $\displaystyle{\sum_{\substack{\lambda_1+\lambda_2+\cdots+\lambda_n=n\\\lambda_1\geqslant \lambda_2\geqslant \cdots \geqslant \lambda_n\geqslant 0}} (-1)^{1+\lambda_1}(1+\lambda_1)\binom{\lambda_1}{\lambda_2} \binom{\lambda_2}{\lambda_3} \cdots \binom{\lambda_n}{0}  \frac{1^{\lambda_1}3^{\lambda_2}\cdots(2n-1)^{\lambda_n}}
			{1^{\lambda_1}2^{\lambda_2}\cdots n^{\lambda_n}}= 0}.$
	\end{enumerate}
\end{conjecture}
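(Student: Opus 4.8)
The plan is to run the generating-function argument of Section~\ref{S3} with a different weight on the number of parts of a partition; this also makes the statement unconditional, in particular independent of Conjecture~\ref{C}. Recall the mechanism of that section: expanding a power $G(z)^{j}$ of a series $G(z)=\sum_{m\geqslant 1}g_{m}z^{m}$ over compositions of $n$, collecting by degree, and passing to conjugate partitions $\lambda_{1}\geqslant\cdots\geqslant\lambda_{n}\geqslant 0$ (so that the number of parts becomes $\lambda_{1}$ and $\prod_{i}g_{i}^{t_{i}}=\prod_{i}(g_{i}/g_{i-1})^{\lambda_{i}}$ with $g_{0}:=1$) gives, for any sequence $(\alpha_{j})$,
\[
\sum_{j\geqslant 1}\alpha_{j}\,G(z)^{j}=\sum_{n\geqslant 1}z^{n}\sum_{\substack{\lambda_{1}+\cdots+\lambda_{n}=n\\ \lambda_{1}\geqslant\cdots\geqslant\lambda_{n}\geqslant 0}}\alpha_{\lambda_{1}}\binom{\lambda_{1}}{\lambda_{2}}\binom{\lambda_{2}}{\lambda_{3}}\cdots\binom{\lambda_{n}}{0}\prod_{i=1}^{n}\left(\frac{g_{i}}{g_{i-1}}\right)^{\lambda_{i}}.
\]
Section~\ref{S3} takes $\alpha_{j}=(-1)^{j-1}/j$, so $\sum_{j}\alpha_{j}x^{j}=\ln(1+x)$, and then differentiates to create the factor $n$. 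I would instead take $\alpha_{j}=(-1)^{j-1}(1+j)$, for which $\sum_{j\geqslant 1}(-1)^{j-1}(1+j)x^{j}=1-(1+x)^{-2}$ as a formal power series; this produces the factor $1+\lambda_{1}$ directly, with no differentiation.

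For part~2 I would use $G(z)=\sum_{n\geqslant 1}\binom{2n}{n}z^{n}$, where $1+G=(1-4z)^{-1/2}$ and $g_{i}/g_{i-1}=2(2i-1)/i$, so the left-hand side collapses to $1-(1+G)^{-2}=1-(1-4z)=4z$. Since $\prod_{i}(2(2i-1)/i)^{\lambda_{i}}=2^{n}\cdot\frac{1^{\lambda_{1}}3^{\lambda_{2}}\cdots(2n-1)^{\lambda_{n}}}{1^{\lambda_{1}}2^{\lambda_{2}}\cdots n^{\lambda_{n}}}$ (using $\lambda_{1}+\cdots+\lambda_{n}=n$) and $(-1)^{\lambda_{1}-1}=(-1)^{1+\lambda_{1}}$, equating the coefficient of $z^{n}$ for $n>1$ gives $2^{n}$ times the desired sum equal to $0$. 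For part~1 I would use $G(z)=\sum_{n\geqslant 1}\frac{(-1)^{n-1}}{2n-1}\binom{2n}{n}z^{n}$, where $1+G=\sqrt{1+4z}$ and $g_{i}/g_{i-1}=-2(2i-3)/i$, so $1-(1+G)^{-2}=1-(1+4z)^{-1}=\sum_{n\geqslant 1}(-1)^{n-1}4^{n}z^{n}$. Rewriting $\prod_{i}(-2(2i-3)/i)^{\lambda_{i}}=(-1)^{\lambda_{2}+\cdots+\lambda_{n}}\,2^{n}\cdot\frac{1^{\lambda_{1}}1^{\lambda_{2}}3^{\lambda_{3}}\cdots(2n-3)^{\lambda_{n}}}{1^{\lambda_{1}}2^{\lambda_{2}}\cdots n^{\lambda_{n}}}$ and using $\lambda_{2}+\cdots+\lambda_{n}=n-\lambda_{1}$, the signs $(-1)^{\lambda_{1}-1}$ and $(-1)^{n-\lambda_{1}}$ combine to a partition-free $(-1)^{n-1}$ that cancels the one on the right, so $2^{n}$ times the desired sum equals $4^{n}$. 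In both cases dividing by $2^{n}$ closes the identity.

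The step I expect to be the main obstacle is the sign bookkeeping for the second series: one must verify that $\prod_{i=1}^{n}(-2(2i-3)/i)^{\lambda_{i}}$ equals $2^{n}$ times the stated ratio up to the sign $(-1)^{\lambda_{2}+\cdots+\lambda_{n}}$ --- the $i=1$ factor equals $2^{\lambda_{1}}$ and is positive because $|2\cdot 1-3|=1$, while every factor with $i\geqslant 2$ carries exactly one minus sign --- and then that this sign merges with $(-1)^{\lambda_{1}-1}$ into an expression independent of $\lambda$. This is precisely the subtlety already handled in the proof of Theorem~\ref{T2}, where the product $\prod_{i}(2|2i-3|/i)^{\lambda_{i}}$ appears, so it should go through in the same fashion. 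I would also note that the hypothesis $n>1$ is needed only for part~2, since there the right-hand side $4z$ is a monomial whose $z^{n}$-coefficient equals the ``generic'' value $0$ only for $n\geqslant 2$; part~1 holds already for $n\geqslant 1$.
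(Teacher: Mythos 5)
Your argument is correct, and it does more than the paper does: in the paper this statement is only a \emph{conjecture}, obtained conditionally by combining Theorem \ref{T4} with the unproven Conjecture \ref{C} (writing $1+\lambda_1=1+(n+1)\cdot\frac{\lambda_1}{n+1}$), whereas you prove it unconditionally. Your route is a clean variation of the Section \ref{S3} machinery: instead of the coefficients $\alpha_j=(-1)^{j-1}/j$ coming from $\ln(1+x)$ followed by differentiation, you take $\alpha_j=(-1)^{j-1}(1+j)$ with $\sum_{j\geqslant 1}\alpha_j x^j=1-(1+x)^{-2}$, so the factor $1+\lambda_1$ appears directly; the master expansion over partitions grouped by largest part, the conjugation step, and the ratios $g_i/g_{i-1}=2(2i-1)/i$ resp. $-2(2i-3)/i$ are exactly as in the proof of Theorem \ref{T2}, and your sign bookkeeping ($(-1)^{\lambda_1-1}(-1)^{\lambda_2+\cdots+\lambda_n}=(-1)^{n-1}$ in the second series) matches the paper's treatment of $\prod_i(2|2i-3|/i)^{\lambda_i}$. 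The closed forms $1-(1+G)^{-2}=4z$ and $1-(1+4z)^{-1}=\sum_{n\geqslant1}(-1)^{n-1}4^nz^n$ are right, the formal-series composition is legitimate because $G(0)=0$, and small cases ($n=2,3$) confirm both identities, including your observation that part 1 already holds for $n\geqslant1$ while part 2 genuinely needs $n>1$. A worthwhile addendum: since Theorem \ref{T4} is proved in the paper, your unconditional proof of Conjecture \ref{C2} conversely establishes Conjecture \ref{C} for $n>1$ via $\frac{\lambda_1}{n+1}=\frac{(1+\lambda_1)-1}{n+1}$, so this argument settles both open statements at once.
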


This conjecture makes us believe that there is another way to decompose $2$'s powers as sums over integer partitions.


\bigskip


\end{document}